\newcommand{\R}{\mathbb{R}}
\newcommand{\transfer}{\mathcal{L}}
\DeclareMathOperator{\interior}{int}
\newtheorem{theorem}{Theorem}
\newtheorem{corollary}{Corollary}
\theoremstyle{definition}
\newtheorem{remark}{Remark}
\newtheorem{prop}{Proposition}
\newtheorem{lemma}{Lemma}
\begin{document}

\title{Smooth Liv\v{s}ic regularity for piecewise expanding maps}
\author{Matthew Nicol \and Tomas Persson\thanks{M.\ N.\ and
    T.\ P.\ would like to thank the support and hospitality of
    Institute Mittag-Leffler, Djursholm, Sweden. The authors would
    also like to thank Viviane Baladi for providing some comments and
    references. Tomas Persson was supported by EC FP6 Marie Curie ToK
    programme CODY.}}  \maketitle

\begin{abstract}
We consider the regularity of measurable solutions $\chi$ to the
cohomological equation
\[
  \phi = \chi \circ T -\chi,
\]
where $(T,X,\mu)$ is a dynamical system and $\phi \colon X\rightarrow
\R$ is a $C^k$ valued cocycle in the setting in which $T \colon
X\rightarrow X$ is a piecewise $C^k$ Gibbs--Markov map, an affine
$\beta$-transformation of the unit interval or more generally a
piecewise $C^{k}$ uniformly expanding map of an interval.  We show
that under mild assumptions, bounded solutions $\chi$ possess $C^k$
versions. In particular we show that if $(T,X,\mu)$ is a
$\beta$-transformation then $\chi$ has a $C^k$ version, thus improving a
result of Pollicott et al.~\cite{Pollicott-Yuri}.
\end{abstract}

\bigskip
\noindent Mathematics Subject Classification 2010: 37D50, 37A20, 37A25.

\section{Introduction} 
In this note we consider the regularity of solutions $\chi$ to the
cohomological equation
\begin{equation} \label{coho}
  \phi = \chi \circ T -\chi
\end{equation}
where $(T,X,\mu)$ is a dynamical system and $\phi \colon X\rightarrow
\R$ is a $C^k$ valued co\-cycle. In particular we are interested in the
setting in which $T \colon X\rightarrow X$ is a piecewise $C^k$
Gibbs--Markov map, an affine $\beta$-transformation of the unit
interval or more generally a piecewise $C^{k}$ uniformly expanding map
of an interval. Rigidity in this context means that a
solution $\chi$ with a certain degree of regularity is forced by the
dynamics to have a higher degree of regularity.  Cohomological equations arise
frequently in ergodic theory and dynamics and, for example, determine
whether observations $\phi$ have positive variance in the central
limit theorem and and have implication for other distributional limits
(for examples see~\cite{ParryPollicottBook,ADSZ}). Related
cohomological equations to Equation~\eqref{coho} decide on stable
ergodicity and weak-mixing of compact group extensions of hyperbolic
systems~\cite{Keynes-Newton,ParryPollicottBook,noorani} and also play
a role in determining whether two dynamical systems are (H\"older,
smoothly) conjugate to each other.

Liv\v{sic}~\cite{Livsic1,Livsic2} gave seminal results on the 
regularity of measurable solutions to cohomological equations for
Abelian group extensions of Anosov systems with an absolutely
continuous invariant measure.  Theorems which establish that a priori
measurable solutions to cohomological equations must have a higher
degree of regularity are often called measurable Liv\v{s}ic theorems
in honor of his work.

We say that $\chi \colon X \to \R$ has a $C^k$ version (with respect
to $\mu$) if there exists a $C^k$ function $h \colon X\to \R$ such
that $h(x) =\chi (x)$ for $\mu$ \mbox{a.e.} $x\in X$.
 
Pollicott and Yuri~\cite{Pollicott-Yuri} prove Liv\v{s}ic theorems for
H\"{o}lder $\R$-extensions of $\beta$-transformations ($T \colon
[0,1)\rightarrow [0,1)$, $T(x)=\beta x \pmod 1$ where $\beta>1$) via
transfer operator techniques. They show that any essentially bounded
measurable solution $\chi$ to Equation~\eqref{coho} is of bounded
variation on $[0,1-\epsilon)$ for any $\epsilon>0$.  In this paper we
improve this result to show that measurable coboundaries $\chi$ for
$C^k$ $\R$-valued cocycles $\phi$ over $\beta$-transformations have $C^k$
versions (see~Theorem~\ref{the:smoothlivsic}).

Jenkinson~\cite{Jenkinson} proves that integrable measurable
coboundaries $\chi$ for $\R$-valued smooth cocycles $\phi$
(\mbox{i.e.} again solutions to $\phi=\chi \circ T-\chi$) over smooth
expanding Markov maps $T$ of $S^1$ have versions which are smooth on
each partition element.

Nicol and Scott~\cite{NicolScott} have obtained measurable Liv\v{s}ic
theorems for certain discontinuous hyperbolic systems, including
$\beta$-transformations, Markov maps, mixing Lasota--Yorke maps, a
simple class of toral-linked twist map and Sinai dispersing
billiards. They show that a measurable solution $\chi$ to
Equation~\eqref{coho} has a Lipschitz version for
$\beta$-transformations and a simple class of toral-linked twist
map. For mixing Lasota--Yorke maps and Sinai dispersing billiards they
show that such a $\chi$ is Lipschitz on an open set. There is an
error in~\cite[Theorem 1]{NicolScott} in the setting of $C^2$ Markov
maps --- they only prove measurable solutions $\chi$ to
Equation~\eqref{coho} are Lipschitz on each element $T\alpha$, $\alpha
\in \mathcal{P}$, where $\mathcal{P}$ is the defining partition for
the Markov map, and not that the solutions are Lipschitz on $X$, as
Theorem 1 erroneously states. The error arose in the following way: if
$\chi$ is Lipschitz on $\alpha\in \mathcal{P}$ it is possible to
extend $\chi$ as a Lipschitz function to $T\alpha$ by defining
$\chi(Tx)=\phi(x)+\chi (x)$, however extending $\chi$ as a Lipschitz
function from $\alpha$ to $T^2 \alpha$ via the relation $\chi (T^2 x)=
\phi(T x)+ \chi(T x)$ may not be possible, as $\phi \circ T$ may have
discontinuities on $T\alpha$.  In this paper we give an example,
(see~Section~\ref{eg}), which shows that for Markov maps this result cannot
be improved on.
 
Gou\"ezel \cite{Gouezel} has obtained similar results to Nicol and
Scott~\cite{NicolScott} for cocycles into Abelian groups over
one-dimensional Gibbs--Markov systems. In the setting of Gibbs--Markov
system with countable partition he proves any measurable solution
$\chi$ to Equation~\eqref{coho} is Lipschitz on each element
$T\alpha$, $\alpha \in \mathcal{P}$, where $\mathcal{P}$ is the
defining partition for the Gibbs--Markov map.


In related work, Aaronson and Denker~\cite[Corollary 2.3]{AD} have
shown that if $(T,X,\mu,\mathcal{P})$ is a mixing Gibbs--Markov map
with countable Markov partition
$\mathcal{P}$ preserving a probability measure $\mu$ and $\phi \colon X\to \R^d$ is Lipschitz (with respect
to a metric $\rho$ on $X$ derived from the symbolic dynamics) then any
measurable solution $\chi \colon X\to \R^d$ to $\phi=\chi \circ T
-\chi$ has a version $\tilde{\chi}$ which is Lipschitz continuous,
\mbox{i.e.} there exists $C>0$ such that $d(\tilde \chi(x), \tilde
\chi(y)) \leq C \rho(x,y)$ for all $x,y \in T(\alpha)$ and each
$\alpha\in \mathcal{P}$.

Bruin et al.~\cite{BHN} prove measurable Liv\v{s}ic theorems for
dynamical systems modelled by Young towers and Hofbauer towers. Their
regularity results apply to solutions of cohomological equations posed
on H\'enon-like mappings and a wide variety of non-uniformly hyperbolic
systems. We note that Corollary~1 of~\cite[Theorem 1]{BHN} is not
correct --- the solution is H\"{o}lder only on $M_k$ and $TM_k$ rather
than $T^j M_k$ for $j>1$ as stated for reasons similar to those given
above for the result in Nicol et al.~\cite{NicolScott}. 

\section{Main results}

We first describe one-dimensional Gibbs--Markov maps.  Let $I \subset \R$ be a bounded
interval, and $\mathcal{P}$ a countable partition of $I$ into
intervals. We let $m$ denote Lebesgue measure. Let $T \colon I \to I$
be a piecewise $C^k$, $k\ge 2$, expanding map such that $T$ is $C^k$
on the interior of each element of $\mathcal{P}$ with $\lvert T'
\rvert > \lambda > 1$, and for each $\alpha \in \mathcal{P}$,
$T\alpha$ is a union of elements in $\mathcal{P}$. Let $P_n:=
\bigvee_{j=0}^{n} T^{-j}\mathcal{P}$ and $J_T:=\frac{d(m\circ
  T)}{dm}$.  We assume:

\begin{itemize}
\item[(i)] (Big images property) There exists $C_1>0$ such that
  $m(T\alpha)>C_1$ for all $\alpha \in \mathcal{P}$.

\item[(ii)] There exists $0< \gamma_1<1$ such that $m(\beta) <
  \gamma_1^n$ for all $\beta \in P_n$.

\item[(iii)] (Bounded distortion) There exists $0<\gamma_2<1$ and
  $C_2>0$ such that $|1 - \frac{J_T (x)}{J_T (y)} |<C_2 \gamma_2^n$
  for all $x,y\in \beta$ if $\beta\in P_n$.
\end{itemize}

Under these assumptions $T$ has an invariant absolutely continuous
probability measure $\mu$ and the density of $\mu$,
$h=\frac{d\mu}{dm}$ is bounded above and below by a constant $0<
C^{-1} \le h(x) \le C$ for $m$ \mbox{a.e.} $x\in I$.

Note that a Markov map satisfies (i), (ii) and (iii) for finite
partition $\mathcal{P}$.

It is proved in \cite{NicolScott} for the Markov case (finite
$\mathcal{P}$), and in \cite{Gouezel} for the Gibbs--Markov case
(countable $\mathcal{P}$) that if $\phi \colon I \to \R$ is H\"older
continuous or Lipschitz continuous, and $\phi = \chi \circ T - \chi$
for some measurable function $\chi \colon I \to \R$, then there exists
a function $\chi_0 \colon I \to \R$ that is H\"older or Lipschitz on
each of the elements of $\mathcal{P}$ respectively, and $\chi_0 =
\chi$ holds $\mu$ (or $m$) a.e.  A related result to~\cite{Gouezel} is
given in~\cite[Theorem 7]{BHN} where $T$ is the base map of a Young
Tower, which has a Gibbs--Markov structure. 

Fried~\cite{Fried} has shown that the transfer operator of a graph
directed Markov system with $C^{k, \alpha}$-contractions, acting on a
space of $C^{k,\alpha}$-functions, has a spectral gap. If we apply his
result to our setting, letting the contractions be the inverse
branches of a Gibbs--Markov map we can conclude that the transfer
operator of a Gibbs--Markov map acting on $C^k$-functions has a
spectral gap. As in Jenkinson's paper~\cite{Jenkinson} and with the
same proof, this gives us immediately the following proposition, which
is implied by the results of Fried and Jenkinson:
\begin{prop}\label{thm_markov}
  Let $T \colon T\to I$ be a mixing Gibbs--Markov map such that $T$ is $C^k$
  on each partition element and $T^{-1} \colon T(\alpha)\to \alpha$ is
  $C^k$ on each partition element $\alpha\in \mathcal{P}$.  Let $\phi
  \colon I \to \R$ be uniformly $C^k$ on each of the partition
  elements $\alpha \in \mathcal{P}$.  Suppose $\chi \colon I \to \R$
  is a measurable function such that $\phi = \chi \circ T -
  \chi$. Then there exists a function $\chi_0 \colon I \to \R$ such
  that $\chi_0$ is uniformly $C^{k}$ on $T\alpha$ for each partition
  element of $\alpha \in \mathcal{P}$, and $\chi_0 = \chi$ almost
  everywhere.
\end{prop}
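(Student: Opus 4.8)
The plan is to run the transfer-operator argument of Jenkinson~\cite{Jenkinson}, feeding it the spectral gap on $C^k$ supplied by Fried~\cite{Fried}. Let $\transfer$ denote the transfer operator of $T$ normalised with respect to the invariant measure $\mu$, i.e.\ the $L^2(\mu)$-adjoint of the Koopman operator $f \mapsto f\circ T$, so that $\int (\transfer f)\,g\,d\mu = \int f\,(g\circ T)\,d\mu$ for all suitable $f,g$. This operator satisfies $\transfer 1 = 1$ and the pointwise identity $\transfer\bigl((g\circ T)\,f\bigr) = g\,\transfer f$. Applying it with $f = 1$ to the relation $\chi\circ T = \phi + \chi$ gives the key identity
\[
  \chi = \transfer(\chi\circ T) = \transfer\phi + \transfer\chi ,
\]
and iterating $n$ times yields $\chi = \transfer^{n}\chi + \sum_{j=1}^{n}\transfer^{j}\phi$, valid $\mu$-almost everywhere.

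Next I would introduce the Banach space $\mathcal{B}$ of functions that are uniformly $C^k$ on each image element $T\alpha$, $\alpha\in\mathcal{P}$, normed by the supremum over $\alpha$ of the corresponding $C^k$ norms. Fried's theorem, applied to the inverse branches $T^{-1}\colon T\alpha\to\alpha$ regarded as the $C^k$ contractions of a graph directed Markov system, shows that $\transfer\colon\mathcal{B}\to\mathcal{B}$ is bounded and quasi-compact with a spectral gap: the eigenvalue $1$ is simple and isolated, with eigenfunction the density, while mixing forces the rest of the spectrum into a disk of radius $\theta<1$. Writing $P$ for the spectral projection onto the constants, $Pf = (\int f\,d\mu)\,1$, and using that $\phi$ is a coboundary so that $\int\phi\,d\mu = 0$ and hence $P\phi = 0$, one gets $\transfer^{j}\phi = \transfer^{j}(I-P)\phi$ with $\lVert\transfer^{j}\phi\rVert_{\mathcal{B}}\le C\theta^{j}\lVert\phi\rVert_{\mathcal{B}}$. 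Therefore $\sum_{j\ge 1}\transfer^{j}\phi$ converges in $\mathcal{B}$ to a limit $g$ that is uniformly $C^k$ on each $T\alpha$.

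It then remains to identify the limit of $\transfer^{n}\chi$. By the Gibbs--Markov Liv\v{s}ic results quoted above~\cite{NicolScott,Gouezel}, $\chi$ already admits a version that is Lipschitz on each $T\alpha$, hence bounded and in particular $\chi\in L^{1}(\mu)$; by exactness of the mixing Gibbs--Markov map $T$ the iterates satisfy $\transfer^{n}\chi\to\int\chi\,d\mu$ in $L^{1}(\mu)$. Passing to the limit $n\to\infty$ in the iterated identity, using $\mathcal{B}$-convergence (hence uniform, hence a.e.) of the sum together with $L^{1}$ (hence a.e.\ along a subsequence) convergence of $\transfer^{n}\chi$, I obtain
\[
  \chi = \int\chi\,d\mu + g \qquad \mu\text{-a.e.}
\]
Setting $\chi_0 := \int\chi\,d\mu + g$ produces a function that is uniformly $C^k$ on each $T\alpha$ and agrees with $\chi$ almost everywhere, as required.

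The one genuinely technical step is the spectral gap of $\transfer$ on $\mathcal{B}$: one must verify that the transfer operator preserves the space of functions that are $C^k$ on each $T\alpha$ and acts on it quasi-compactly with $1$ as a simple, isolated, dominant eigenvalue. This is precisely what Fried's result delivers once the inverse branches are recast as the contractions of a graph directed Markov system, and it is also the reason the conclusion is only piecewise: $\transfer f$ is a sum over preimage branches whose number jumps across the endpoints of the sets $T\alpha$, so smoothness can (and in general does) fail at those boundaries, yielding regularity on each $T\alpha$ rather than on all of $I$.
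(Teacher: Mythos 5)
Your argument is essentially the paper's own proof: the paper establishes Proposition~\ref{thm_markov} by citing Fried's spectral gap for the transfer operator acting on piecewise $C^k$ functions \cite{Fried} combined with Jenkinson's coboundary argument \cite{Jenkinson}, and you have written out exactly that combination --- the telescoping identity $\chi = \transfer^n\chi + \sum_{j=1}^{n}\transfer^j\phi$, convergence of the tail in the piecewise-$C^k$ norm via the spectral gap, and identification of the limit of $\transfer^n\chi$ by exactness. Your preliminary step of invoking the Gou\"ezel/Nicol--Scott Lipschitz version \cite{Gouezel,NicolScott} to make $\chi$ integrable before applying $\transfer$ is a reasonable patching of a detail the paper leaves implicit.
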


\section{A counterexample}\label{eg}

We remark that in general, if $\phi = \chi \circ T - \chi$, one cannot
expect $\chi$ to be continuous on $I$ if $\phi$ is $C^k$ on $I$. We
give an example of a Markov map $T$ with Markov partition
$\mathcal{P}$, a function $\phi$ that is $C^k$ on $I$, and a function
$\chi$ that is $C^k$ on each element $\alpha$ of $\mathcal{P}$ such
that $\phi = \chi \circ T - \chi$, yet $\chi$ has no version that is
continuous on $I$.

Let $0 < c < \frac{1}{4}$. Put $d = 2 - 4 c$. Define $T \colon [0,1]
\to [0,1]$ by
\[
  T (x) = \left\{ \begin{array}{ll} 2 x + \frac{1}{2} & \text{if } 0
    \leq x \leq \frac{1}{4} \rule{0pt}{14pt} \\
        d (x -\frac{1}{2}) + \frac{1}{2} & \text{if } \frac{1}{4} < x
        < \frac{3}{4} \rule{0pt}{14pt} \\
        2 x - \frac{3}{2} & \text{if } \frac{3}{4} \leq x \leq 1
        \rule{0pt}{14pt}
        \end{array} \right..
\]
If $c = \frac{1}{8}$, then the partition
\begin{align*}
  \mathcal{P} = \biggl\{ & \Bigl[0, \frac{1}{8} \Bigr], \Bigl[
    \frac{1}{8}, \frac{1}{4} \Bigr], \Bigl[ \frac{1}{4}, \frac{1}{2} -
    \frac{1}{4d} \Bigr], \Bigl[ \frac{1}{2} - \frac{1}{4d},
    \frac{1}{2} \Bigr],\\ & \Bigl[ \frac{1}{2}, \frac{1}{2} +
    \frac{1}{4d} \Bigr], \Bigl[ \frac{1}{2} + \frac{1}{4d},
    \frac{3}{4} \Bigr], \Bigl[ \frac{3}{4}, \frac{7}{8} \Bigr], \Bigl[
    \frac{7}{8}, 1 \Bigr] \biggr\}
\end{align*}
is a Markov partition for $T$. Define $\chi$ such that $\chi$ is $0$
on $[ \frac{1}{2} - \frac{1}{4d}, \frac{1}{2}]$ and $1$ on $[
  \frac{1}{2}, \frac{1}{2} + \frac{1}{4d} ]$. On $[0, \frac{1}{4})$ we
  define $\chi$ so that $\chi (0) = 1$ and $\lim_{x \to \frac{1}{4}}
  \chi (x) = 0$, and on $(\frac{3}{4}, 1]$ we define $\chi$ so that
$\chi (1) = 0$ and $\lim_{x \to \frac{3}{4}} \chi (x) = 1$. For any
natural number $k$, this can be done so that $\chi$ is $C^k$ except at
the point $\frac{1}{2}$ where it has a jump. One easily check that
$\phi$ defined by $\phi = \chi \circ T - \chi$ is $C^k$. This is
illustrated in Figures~\ref{fig:graph}--\ref{fig:phi}.

\setlength{\unitlength}{0.3\textwidth}

\begin{figure}
  \begin{minipage}{0.48\textwidth}
    \begin{center}
      \includegraphics[width=\unitlength]{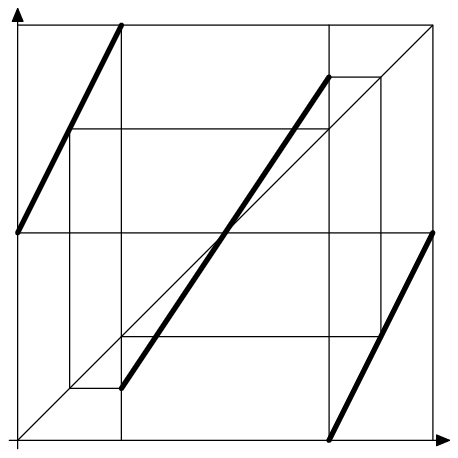}
      \caption{The graph of $T$.}
      \label{fig:graph}
      \bigskip
      
      \includegraphics[width=\unitlength]{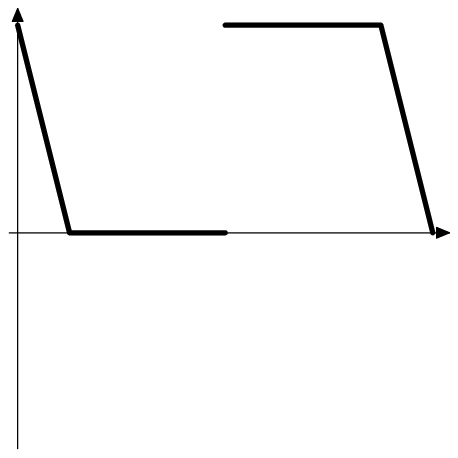}
      \caption{The graph of $\chi$.}
      \label{fig:chi}
    \end{center}
  \end{minipage}
  \hfill
  \begin{minipage}{0.48\textwidth}
    \begin{center}
      \includegraphics[width=\unitlength]{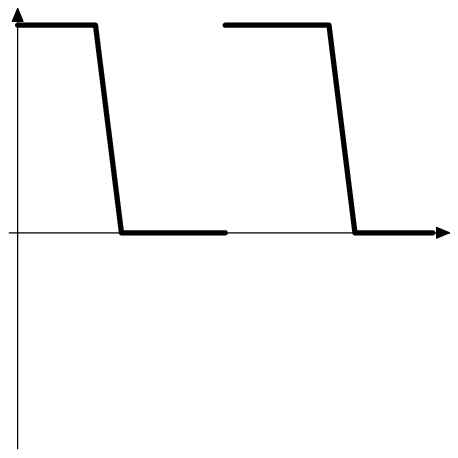}
      \caption{The graph of $\chi \circ T$.}
      \label{fig:chiT}
      \bigskip

      \includegraphics[width=\unitlength]{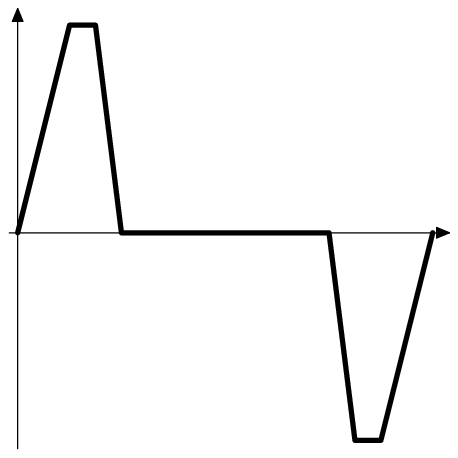}
      \caption{The graph of $\phi = \chi \circ T - \chi$.}
      \label{fig:phi}
    \end{center}
  \end{minipage}
\end{figure}

\section{Liv\v{s}ic  theorems for piecewise expanding\\ maps of an interval}
  
Let $I = [0, 1)$ and let $m$ denote Lebesgue measure on $I$. We consider piecewise expanding maps $T \colon I \to
  I$, satisfying the following assumptions: 

(i) There is a number $\lambda
  > 1$, and a finite partition $\mathcal{P}$ of $I$ into intervals,
  such that the restriction of $T$ to any interval in $\mathcal{P}$
  can be extended to a $C^2$-function on the closure, and $|T'| >
  \lambda$ on this interval. 

(ii)  $T$ has an absolutely
  continuous invariant measure $\mu$ with respect to which  $T$ is mixing.

(iii)  $T$ has the property of being weakly
  covering, as defined by Liverani in \cite{Liverani1}, namely that
  there exists an $n_0$ such that for any element $\alpha \in \mathcal{P}$
  \[
    \bigcup_{j = 0}^{n_0} T^j (\alpha) = I.
  \]

For any $n \geq 0$ we define the partition $\mathcal{P}_n =
\mathcal{P} \vee \cdots \vee T^{-n+1} \mathcal{P}$. The partition
elements of $\mathcal{P}_n$ are called $n$-cylinders, and
$\mathcal{P}_n$ is called the partition of $I$ into $n$-cylinders.

We prove the following two theorems.

\begin{theorem} \label{the:livsic}
 Let $(T,I,\mu)$ be a piecewise expanding map 
satisfying assumptions (i), (ii) and (iii).
 Let $\phi \colon I \to \R$ be a H\"older continuous function, such
  that $\phi = \chi \circ T - \chi$ for some measurable function $\chi$, with
  $e^{-\chi} \in L_1 (m)$. Then there exists a function $\chi_0$ such that
  $\chi_0$ has bounded variation and $\chi_0 = \chi$ almost
  everywhere.
\end{theorem}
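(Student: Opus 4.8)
The plan is to recast the problem through a twisted transfer operator and to use the integrability of $e^{-\chi}$ to produce an $L^1$ fixed point, which the spectral theory of the operator will then force to be of bounded variation. Write $\transfer$ for the Perron--Frobenius operator with respect to Lebesgue measure, $(\transfer f)(x) = \sum_{Ty=x} f(y)/|T'(y)|$, so that $\transfer h = h$, and introduce the twisted operator
\[
  (\transfer_\phi f)(x) = \sum_{Ty = x} \frac{e^{-\phi(y)} f(y)}{|T'(y)|}.
\]
Setting $F = h\, e^{-\chi}$, the cohomological relation $\phi(y) = \chi(Ty) - \chi(y)$ gives $e^{-\phi(y)} e^{-\chi(y)} = e^{-\chi(Ty)}$, and a one-line computation shows $\transfer_\phi F = F$. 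The hypothesis $e^{-\chi}\in L_1(m)$ together with the boundedness of $h$ guarantees $F \in L_1(m)$; moreover $F \ge 0$ and $F \not\equiv 0$. Thus I have a nonnegative $L^1$ fixed point of $\transfer_\phi$ in hand, and the whole difficulty is that a priori $F$ is only integrable, not of bounded variation.

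Next I would invoke the standard spectral theory for $\transfer_\phi$ on the space of functions of bounded variation. Because $\phi$ is H\"older and $T$ is piecewise $C^2$ and expanding, $\transfer_\phi$ satisfies a Lasota--Yorke inequality and is therefore quasi-compact on $\mathrm{BV}$; the weak covering assumption (iii) makes the leading eigenvalue $\rho$ simple, with a strictly positive eigenfunction $F_0 \in \mathrm{BV}$ bounded away from $0$ and $\infty$, and a dual eigenmeasure $\nu$ (a conformal measure) equivalent to $m$ with bounded density, so that $\transfer_\phi^* \nu = \rho \nu$.

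The key step is to identify $\rho$. Pairing the fixed point relation with $\nu$ and using $\transfer_\phi^* \nu = \rho\nu$,
\[
  \int F \,d\nu = \int \transfer_\phi^{\,n} F \, d\nu = \rho^{\,n} \int F \, d\nu \quad \text{for all } n,
\]
and since $F \ge 0$, $F\not\equiv 0$ and $\nu \sim m$ force $0 < \int F\,d\nu < \infty$, this yields $\rho = 1$. Hence $\transfer_\phi F_0 = F_0$, and the normalised operator $\widehat{\transfer} v := F_0^{-1}\transfer_\phi(F_0 v)$ satisfies $\widehat{\transfer}\,1 = 1$; a direct duality computation shows that $\widehat{\transfer}$ is the Perron--Frobenius operator of the $T$-invariant probability measure $\widehat{\mu} = F_0\,\nu$. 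As $\widehat\mu$ is absolutely continuous with density bounded away from $0$ and $\infty$, uniqueness of the a.c.i.p.\ gives $\widehat\mu = \mu$, so $(T,\widehat\mu)$ is mixing, hence ergodic. The function $u = F/F_0$ lies in $L^1(\widehat\mu)$ and satisfies $\widehat{\transfer} u = u$; since the only $L^1$ fixed points of the Perron--Frobenius operator of an ergodic system are the constants, $u$ is constant. Therefore $F = c\,F_0$ for some $c>0$, and $F$ has a $\mathrm{BV}$ version.

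Finally I would read off $\chi$. From $F = c F_0$ a.e.\ and $F = h e^{-\chi}$ we get $e^{-\chi} = c F_0/h$ a.e., so $\chi_0 := -\log(c F_0/h)$ agrees with $\chi$ almost everywhere. Since $F_0$ and $h$ are of bounded variation and bounded away from $0$ and $\infty$, so is $cF_0/h$, and composing with $\log$ (Lipschitz on the relevant compact interval) preserves bounded variation; thus $\chi_0 \in \mathrm{BV}$, as required. The main obstacle throughout is exactly the passage from the merely integrable fixed point $F$ to a bounded-variation function: this is where the measurability of $\chi$ could in principle obstruct regularity, and it is resolved by the two positivity arguments above --- the eigenmeasure pairing that forces $\rho = 1$, and the ergodicity of the normalised operator that forces $u$ to be constant.
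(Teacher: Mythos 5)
Your setup is sound and close in spirit to the paper's: the identity $\transfer_{-\phi}(h e^{-\chi}) = h e^{-\chi}$ (equivalently, the paper's $\transfer_\phi^n 1 = e^{\chi}\transfer_0^n e^{-\chi}$) is exactly the right starting point, and your final step --- writing $\chi$ as a difference of logarithms of BV functions bounded away from zero --- matches the paper's conclusion. But there is a genuine gap in the middle. You assert that the dual eigenvector of the twisted operator on BV is a measure $\nu$ \emph{equivalent to $m$ with bounded density}, and everything downstream rests on this: the finiteness and positivity of $\int F\,d\nu$ (which is what gives $\rho=1$), the absolute continuity of $\widehat\mu = F_0\,\nu$, and the membership $u = F/F_0 \in L^1(\widehat\mu)$. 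The standard spectral theory does not provide this: for a non-constant weight the eigenmeasure is a conformal (Gibbs-type) measure for the potential $-\phi-\log|T'|$ and is typically singular with respect to Lebesgue, which is the conformal measure for $-\log|T'|$. In the present situation the natural candidate for $\nu$ is a normalisation of $e^{\chi}\,dm$ --- but only $e^{-\chi}\in L^1(m)$ is assumed, so this measure need not even be finite, and its density is precisely the object whose regularity the theorem is meant to establish; the argument is circular at exactly this point. Without control on $\nu$, the pairing $\int F\,d\nu$ is not even well defined ($F$ is only an $m$-equivalence class, and $\nu$ could charge an $m$-null set) and could be $0$ or $+\infty$, so neither the identification $\rho=1$ nor the ergodicity argument for $\widehat{\transfer}$ goes through.

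The paper sidesteps the eigenmeasure entirely. It uses the pointwise identity $\transfer_\phi^n 1 = e^{\chi}\,\transfer_0^n e^{-\chi}$ and compares the two $L^1$ limits (hence a.e.\ limits along a subsequence) $a^n \transfer_\phi^n 1 \to w$ and $\transfer_0^n e^{-\chi} \to h \int e^{-\chi}\,dm$; since the second limit is finite and positive a.e., this forces $a = 1$ and $w = e^{\chi} h \int e^{-\chi}\,dm$ a.e., from which $\chi = \log w - \log h - \log \int e^{-\chi}\,dm$ is of bounded variation because $w$ and $h$ are BV and bounded below. If you want to rescue your route, replace the pairing against $\nu$ by this kind of direct comparison of iterates applied to the concrete $L^1$ function $e^{-\chi}$; as written, the eigenvalue identification and the uniqueness-of-fixed-point step do not stand.
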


For the next theorem we need some   more definitions. Let $A$ be a set,
and denote by $\interior A$ the interior of the set $A$. We assume
that the open sets $T (\interior \alpha)$, where $\alpha$ is an
element in $\mathcal{P}$, cover $\interior I$.

We will now define a new partition $\mathcal{Q}$. For a point $x$ in
the interior of some element of $\mathcal{P}$, we let $Q (x)$ be the
largest open set such that for any $x_2 \in Q (x)$, and any
$m$-cylinder $C_m$, there are points $(y_{1,k})_{k = 1}^n$ and
$(y_{2,k})_{k=1}^n$, such that $y_{1,k}$ and $y_{2,k}$ are in the same
element of $\mathcal{P}$, $T (y_{i, k+1}) = y_{i,k}$, $T (y_{1,1}) =
x$, $T (y_{2,1}) = x_2$, and $y_{1, n}, y_{2, n} \in C_m$. (This
forces $n \geq m$.)

Note that if $Q (x) \cap Q (y) \neq \emptyset$, then for $z \in Q (x)
\cap Q (y)$ we have $Q (z) = Q (x) \cup Q (y)$. We let $\mathcal{Q}$
be the coarsest collection of connected sets, such that any element of
$\mathcal{Q}$ can be represented as a union of sets $Q (x)$.

\begin{theorem} \label{the:smoothlivsic}
Let $(T,I,\mu)$ be a piecewise expanding map satisfying assumptions
(i), (ii) and  (iii).  If $\phi \colon I \to \R$ is a continuously differentiable function,
  such that $\phi = \chi \circ T - \chi$ for some function $\chi$ with
  $e^{-\chi} \in L_1(m)$, then there exists a function $\chi_0$ such that
  $\chi_0$ is continuously differentiable on each element of
  $\mathcal{Q}$ and $\chi_0 = \chi$ almost everywhere. If $T'$ is
  constant on the elements of $\mathcal{P}$, then $\chi_0$ is
  piecewise $C^k$ on $\mathcal{Q}$ if $\phi$ is in $C^k$. If for each $r$,
  $\frac{1}{ (T^r)'}$ is in $C^k$ with derivatives up to
  order $k$ uniformly bounded, then $\chi_0$ is piecewise $C^k$ on
  $\mathcal{Q}$ if $\phi$ is in $C^k$.
\end{theorem}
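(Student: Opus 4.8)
The plan is to start from the bounded-variation version provided by Theorem~\ref{the:livsic} and bootstrap it to a $C^1$ (respectively $C^k$) version on each element of $\mathcal{Q}$, by telescoping the cohomological equation along synchronised backward orbits and using the uniform contraction of the inverse branches. Since a $C^1$ function is Lipschitz and hence H\"older, and since $e^{-\chi}\in L_1(m)$ by hypothesis, Theorem~\ref{the:livsic} applies and furnishes a version $\chi_0$ of $\chi$ of bounded variation. In particular $\chi_0$ is bounded and $\sum_{C\in\mathcal{P}_n}\var_C \chi_0\le \var\chi_0<\infty$, and, $T$ being nonsingular, the identity $\phi=\chi_0\circ T-\chi_0$ persists $m$-almost everywhere.

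First I would fix an element $Q\in\mathcal{Q}$ and a base point $a\in Q$. By the definition of $\mathcal{Q}$, for $x\in Q$ one can follow a sequence of inverse branches $S_k=\psi_{i_1}\circ\cdots\circ\psi_{i_k}$, the same branches for $x$ and for $a$, so that $S_k(x)$ and $S_k(a)$ always lie in a common element of $\mathcal{P}$, and so that $S_n(x),S_n(a)$ can be driven into any prescribed cylinder. Each $\psi_{i_j}$ is $C^2$ with $\lvert\psi_{i_j}'\rvert<\lambda^{-1}$, giving the contraction estimates
\[
  \lvert S_k(x)-S_k(a)\rvert\le \lambda^{-k}\lvert x-a\rvert, \qquad \lvert S_k'(x)\rvert=\frac{1}{\lvert (T^k)'(S_k(x))\rvert}\le \lambda^{-k}.
\]
Iterating $\phi=\chi_0\circ T-\chi_0$ along the orbit yields, for a.e.\ $x$ and every $n$,
\[
  \chi_0(x)-\chi_0(a)=\bigl[\chi_0(S_n(x))-\chi_0(S_n(a))\bigr]+\sum_{k=1}^{n}\bigl[\phi(S_k(x))-\phi(S_k(a))\bigr].
\]
Choosing at each stage a cylinder on which the oscillation of $\chi_0$ is as small as possible --- which is feasible because the variations over the $n$-cylinders sum to at most $\var\chi_0$ while there are arbitrarily many of them --- the bracketed boundary term can be made to vanish, and I obtain, for a.e.\ $x\in Q$,
\[
  \chi_0(x)=\chi_0(a)+\sum_{k=1}^{\infty}\bigl[\phi(S_k(x))-\phi(S_k(a))\bigr].
\]

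The right-hand side is defined for every $x\in Q$, and by the Lipschitz bound on $\phi$ together with the contraction estimate it converges uniformly, so I take it as the definition of the version $\chi_0$ on $Q$. Differentiating term by term, $\tfrac{d}{dx}\phi(S_k(x))=\phi'(S_k(x))S_k'(x)$ has modulus at most $\lVert\phi'\rVert_\infty\lambda^{-k}$; hence the differentiated series converges uniformly and $\chi_0$ is $C^1$ on $Q$ with continuous derivative $\sum_k\phi'(S_k)S_k'$. For the $C^k$ assertions I would differentiate $j\le k$ times with the Fa\`a di Bruno formula. When $T'$ is constant on the elements of $\mathcal{P}$ the branches $S_k$ are affine, so $\tfrac{d^j}{dx^j}\phi(S_k(x))=\phi^{(j)}(S_k(x))\,(S_k'(x))^j=O(\lambda^{-jk})$; in the general case, the assumption that $1/(T^r)'$ is $C^k$ with uniformly bounded derivatives gives, by induction, $\lvert S_k^{(q)}\rvert\le C_q\lambda^{-k}$ for $1\le q\le k$, so that every Fa\`a di Bruno term carries at least one factor of size $O(\lambda^{-k})$ and $\tfrac{d^j}{dx^j}\phi(S_k(x))=O(\lambda^{-k})$. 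In either case the derivative series up to order $k$ converge uniformly on $Q$, whence $\chi_0$ is $C^k$ on $Q$.

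The hard part will be the telescoping step, and specifically forcing the boundary term $\chi_0(S_n(x))-\chi_0(S_n(a))$ to $0$. The cohomological equation holds only $m$-almost everywhere, so I must first use nonsingularity of $T$ to propagate it along almost every backward orbit; more seriously, a mere bounded-variation function need not be continuous at the point into which the orbit shrinks, so the vanishing of the boundary term cannot be read off from continuity. This is exactly where the fine structure of $\mathcal{Q}$ is indispensable: its definition guarantees that the two synchronised orbits can be steered, at arbitrary depth, into a common cylinder chosen freely among all cylinders of that depth, hence into one of vanishingly small oscillation. Points lying in different elements of $\mathcal{Q}$ admit no such common synchronisation, which is precisely why continuous differentiability can only be asserted within the elements of $\mathcal{Q}$.
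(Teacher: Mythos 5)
Your proposal is correct and follows essentially the same route as the paper: invoke Theorem~\ref{the:livsic} to get a bounded-variation version, telescope the cohomological equation along synchronised inverse branches inside an element of $\mathcal{Q}$, kill the boundary term by steering the two backward orbits into an $n$-cylinder of small oscillation (possible because the variations over the $N(n)$ cylinders sum to at most $\var\chi_0$), and then obtain the derivative from the resulting uniformly convergent series, with the higher-order cases handled exactly as you describe (trivially when the branches are affine, and by induction on the derivatives of $1/(T^r)'$ otherwise). The only cosmetic difference is that you define $\chi_0$ on $Q$ by the series from a fixed base point and differentiate term by term, whereas the paper passes to the limit of the difference quotient directly.
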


It is not always clear how big the elements in the partition
$\mathcal{Q}$ are. The following lemma gives a lower bound on the
diameter of the elements in $\mathcal{Q}$.

\begin{lemma}
  Assume that the sets $\{\, T (\interior \alpha) : \alpha \in
  \mathcal{P} \,\}$ cover $(0,1)$. Let $\delta$ be the Lebesgue number
  of the cover. Then the diameter of $\mathcal{Q} (x)$ is at least
  $\delta / 2$ for all $x$.
\end{lemma}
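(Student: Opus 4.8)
The plan is to show that two points lying sufficiently close together can always be linked in the sense demanded by the definition of $Q$, so that $Q(x)$ — and hence the larger set $\mathcal{Q}(x)$ — must contain an interval of length at least $\delta/2$. The engine of the argument is a \emph{synchronised pull-back}. If two points $p,p'$ satisfy $|p-p'|<\delta$, then since $\{p,p'\}$ has diameter less than $\delta$ the defining property of the Lebesgue number places it inside a single image $T(\interior\alpha)$; the inverse branch $(T|_\alpha)^{-1}$ is therefore defined on the whole interval $\overline{pp'}$, carries it into the single element $\alpha$, and, because $|T'|>\lambda$, contracts it by a factor at least $\lambda^{-1}$. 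Thus, starting from $J_0:=\overline{x x_2}$ with $|x-x_2|$ small, one may take arbitrarily many simultaneous preimages: at every level the two orbit points lie in one common interval $J_k$ sitting in a single partition element, so the ``same element of $\mathcal{P}$'' requirement is automatic, while $|J_k|\le\lambda^{-k}|J_0|$.

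To terminate such an orbit inside a prescribed cylinder $C_m$ I would use topological exactness, which follows from assumptions (i)--(iii): an interval expands by the factor $\lambda$ under $T$ until it meets a partition point, and once a forward image contains a full element of $\mathcal{P}$ the weak covering property (iii) spreads it over all of $I$, so for each $C_m$ there is an $n$ with $T^n(\interior C_m)=(0,1)$ up to finitely many points. The construction then runs in two stages. First pull $J_0$ back as above to a short interval $J_r$ lying around a point $a_r$ inside a single element of $\mathcal{P}$. Next, using exactness, pick $w\in\interior C_m$ with $T^n(w)=a_r$, choosing the branch so that $a_r$ lies well inside the domain $T^n(\interior W)$ of the corresponding inverse branch $g=(T^n|_W)^{-1}$; expansion makes this domain long, so $J_r$ is contained in it once $r$ is large enough, and $g(J_r)\subseteq\interior C_m$. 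Applying $g$ to the two endpoints of $J_r$ (which carry the pulled-back copies of $x$ and $x_2$) yields preimages $y_{1},y_{2}\in C_m$ of $x$ and $x_2$; together with the pull-back steps and the steps of $g$ these realise the configuration $(y_{1,k}),(y_{2,k})$ of the definition, with the two points in a common element at every level and total depth $r+n\ge m$. Since $C_m$ was arbitrary, $x_2\in Q(x)$, and running this over the admissible window of starting points produces an interval of length at least $\delta/2$ inside $Q(x)\subseteq\mathcal{Q}(x)$, giving $\operatorname{diam}\mathcal{Q}(x)\ge\delta/2$.

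The main obstacle is the interplay of the two mechanisms in the steering stage: one must keep the whole small interval inside a \emph{single} branch domain at each pull-back, to preserve synchronisation, while simultaneously directing it into the specified cylinder $C_m$. Closeness below $\delta$ guarantees, via the Lebesgue number, that \emph{some} common branch is always available, but to follow the \emph{particular} branch sequence reaching $C_m$ one must ensure the interval never straddles the relevant branch boundary. This calls for a distortion and position estimate showing that the orbit point carried inside $J_k$ remains farther from that boundary than $|J_k|$ itself, and it is precisely in this estimate that the passage from the Lebesgue number $\delta$ to the guaranteed diameter $\delta/2$ is absorbed.
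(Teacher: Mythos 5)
Your first stage (synchronised pull-back of a $\delta$-window along a single chain of inverse branches, using the Lebesgue number at every level to keep the two orbits in a common partition element) is exactly the paper's mechanism, and your second stage (decomposing $C_m$ into deeper cylinders so that some single inverse branch of an iterate carries a neighbourhood of the pulled-back interval into $C_m$) also matches: the paper writes $C_m=\bigcup_i D_i$ with $D_i$ cylinders of generation $n_0$, so that $R:=[0,1]\setminus T^{n_0}(\bigcup_i\interior D_i)$ is a finite set of obstruction points. The genuine gap sits precisely at the point you yourself flag as ``the main obstacle''. You propose to close it with ``a distortion and position estimate showing that the orbit point carried inside $J_k$ remains farther from that boundary than $|J_k|$ itself''. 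No such estimate holds in general: a point of $R$ may, for example, be a fixed point of the relevant branch chain, in which case every sufficiently deep full pre-image of your window contains that same obstruction at a position that does not migrate to the edge, no matter how deep you pull back. Note also that if your estimate were available you would conclude $\operatorname{diam}\mathcal{Q}(x)\ge\delta$, which is stronger than the lemma claims; the factor $1/2$ in the statement is the signature of the step that is missing.

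The paper's resolution is combinatorial rather than metric. Let $\varepsilon$ be the minimal gap between the finitely many points of $R$, and take the pull-back depth $n_1$ so large that $\delta\lambda^{-n_1}<\varepsilon$; then any full pre-image of a $\delta$-window has diameter less than $\varepsilon$ and therefore contains \emph{at most one} point of $R$, so at most one point $z$ of the window is obstructed. One then slides the window by $\delta/2$ and repeats: either the shifted window is unobstructed, or its obstruction differs from $z$ (in which case the union of the two windows lies in one element of $\mathcal{Q}$), or its obstruction is again $z$, in which case the two intervals on either side of $z$ each have length at least $\delta/2$ and each avoids the unique obstruction of the window containing it. That two-window splitting argument is what actually produces the bound $\delta/2$, and it is the piece your proposal still needs.
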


\begin{proof}
  Let $C_m$ be a cylinder of generation $m$. We need to show that for
  some $n \geq m$ there are sequences $(y_{1,k})_{k=1}^n$ and
  $(y_{2,k})_{k=1}^n$ as in the definition of $\mathcal{Q}$ above.

  Take $n_0$ such that $\mu (T^{n_0} (C_m)) = 1$. Write $C_m$ as a
  finite union of cylinders of generation $n_0$, $C_m =
  \bigcup_i D_i$. Then $R := [0, 1] \setminus T^{n_0} ( \cup_i \interior
  D_i )$ consists of finitely many points. Let $\varepsilon$ be the
  smallest distance between two of these points.

  Let $I_\delta$ be an open interval of diameter $\delta$. Let $n_1$ be
  such that $\delta \lambda^{-n_1} < \varepsilon$. Consider the full
  pre-images of $I_\delta$ under $T^{n_1}$. By the definition of $\delta$,
  there is at least one such pre-image, and any such pre-image is of
  diameter less than $\varepsilon$. Hence any pre-image contains at
  most one point from $R$.

  If the pre-image does not contain any point of $R$, then $I_\delta$
  is contained in some element of $\mathcal{Q}$ and we are
  done. Assume that there is a point $z$ in $I_\delta$ corresponding
  to the point of $R$ in the pre-image of $I_\delta$. Assume that $z$
  is in the right half of $I_\delta$. The case when $z$ is in the left
  part is treated in a similar way. Take a new open interval
  $J_\delta$ of length $\delta$, such that the left half of $J_\delta$
  coincides with the right half of $I_\delta$.

  Arguing in the same way as for $I_\delta$, we find that a pre-image
  of $J_\delta$ contains at most one point of $R$. If there is no such
  point, or the corresponding point $z_J \in J_\delta$ is not equal to
  $z$, then $I_\delta \cup J_\delta$ is contained in an element in
  $\mathcal{Q}$ and we are done.

  It remains to consider the case $z = z_J$. Let $I_\delta = (a, b)$
  and $J_\delta = (c, d)$. Then the intervals $(a, z)$ and $(z, d)$
  are both of length at least $\delta / 2$, and both are contained in
  some element of $\mathcal{Q}$. This finishes the proof.
\end{proof}

\begin{corollary}
  If $\beta>1$ and $T \colon x \mapsto \beta x \pmod 1$ is a
  $\beta$-transformation then clearly $T$ is weakly covering and
  $\mathcal{Q} = \{ (0,1) \}$, so in this case
  Theorem~\ref{the:smoothlivsic} and Theorem 1 of~\cite{NicolScott}
  imply that $\chi_0$ is in $C^k$ if $\phi$ is in $C^k$.
\end{corollary}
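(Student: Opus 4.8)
The plan is to verify the two structural claims that the corollary asserts about the $\beta$-transformation — that it is weakly covering and that $\mathcal{Q} = \{(0,1)\}$ — and then feed these into Theorem~\ref{the:smoothlivsic}. First I would record the structure of the defining partition $\mathcal{P}$: the branches of $T(x) = \beta x \pmod 1$ are the intervals on which $\lfloor \beta x \rfloor$ is constant, each of which (except possibly the rightmost) maps affinely, with slope $\beta$, onto all of $(0,1)$. In particular the leftmost branch $[0, 1/\beta)$ is always a \emph{full} branch, mapping onto $(0,1)$. Weak covering is then immediate for every full branch $\alpha$, since $T(\alpha) = (0,1) = \interior I$, giving assumption~(iii) with $n_0 = 1$; for the single possibly non-full branch one uses that $T$ expands lengths by the factor $\beta > 1$ and is topologically exact, so its forward images cover $I$ after finitely many steps, and finiteness of $\mathcal{P}$ yields a common $n_0$. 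This step also records that $T' \equiv \beta$ is constant on each element of $\mathcal{P}$, which is exactly the hypothesis triggering the piecewise-$C^k$ conclusion of Theorem~\ref{the:smoothlivsic}.

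The substantive step is to show $Q(x) = (0,1)$ for every interior point $x$, so that $\mathcal{Q}$ consists of the single element $(0,1)$. Fix interior points $x$ and $x_2$ and an arbitrary $m$-cylinder $C_m$; I must produce the matched inverse-orbit sequences $(y_{1,k})_{k=1}^n$ and $(y_{2,k})_{k=1}^n$ of the definition. The idea is to locate a \emph{full} subcylinder $C_n \subseteq C_m$, that is, an $n$-cylinder with $n \geq m$ on which $T^n$ is a homeomorphism onto $(0,1)$. Such a $C_n$ exists because $T$ is topologically exact, so $T^M(C_m) = (0,1)$ for some $M \geq m$; the $T^M|_{C_m}$-preimage of the full $1$-cylinder $[0,1/\beta)$ is then an $(M+1)$-cylinder inside $C_m$ on which $T^{M+1}$ maps onto $(0,1)$. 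Given such a $C_n$, let $S := (T^n|_{C_n})^{-1} \colon (0,1) \to C_n$ be the corresponding inverse branch, and set $y_{1,n} = S(x)$, $y_{2,n} = S(x_2)$, and $y_{i,k} = T^{n-k}(y_{i,n})$ for $k < n$. Then $T(y_{i,k+1}) = y_{i,k}$, $T(y_{1,1}) = x$, $T(y_{2,1}) = x_2$, and $y_{1,n}, y_{2,n} \in C_n \subseteq C_m$; since both orbits follow the same inverse branch $S$, the points $y_{1,k}$ and $y_{2,k}$ lie in the same element of $\mathcal{P}$ for each $k$. Thus $x_2 \in Q(x)$, and since $x_2$ was arbitrary, $Q(x) = (0,1)$ and $\mathcal{Q} = \{(0,1)\}$.

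With these two facts in hand the conclusion is essentially formal. Because $T'$ is constant on each partition element and $\phi$ is $C^k$, Theorem~\ref{the:smoothlivsic} provides a version $\chi_0$ that is piecewise $C^k$ on $\mathcal{Q}$; as $\mathcal{Q} = \{(0,1)\}$, this means $\chi_0$ is $C^k$ on the open interval $(0,1)$. I would then combine this with Theorem~1 of~\cite{NicolScott}, which guarantees that $\chi$ already has a Lipschitz (hence continuous) version on $[0,1)$; matching the two versions, which agree almost everywhere, controls the endpoint behaviour and upgrades ``$C^k$ on $(0,1)$'' to the global statement $\chi_0 \in C^k$.

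I expect the main obstacle to be the full-subcylinder argument in the second paragraph: rigorously establishing that every cylinder $C_m$ contains an $n$-cylinder on which $T^n$ maps onto all of $(0,1)$, and keeping careful track of which inverse branch is used so that the partition-element matching in the definition of $Q(x)$ is satisfied literally rather than only morally. Everything else — weak covering, the constancy of $T'$, and the final assembly through Theorem~\ref{the:smoothlivsic} and~\cite{NicolScott} — is routine once this combinatorial fact about $\beta$-expansions is pinned down.
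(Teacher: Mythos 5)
The paper gives no separate proof of this corollary --- the statement is its own one-line justification (``clearly $T$ is weakly covering and $\mathcal{Q}=\{(0,1)\}$, so Theorem~\ref{the:smoothlivsic} and Theorem~1 of~\cite{NicolScott} imply\dots''), so your expansion of the word ``clearly'' is in the right spirit, and your mechanism for showing $\mathcal{Q}=\{(0,1)\}$ --- locate a full $n$-subcylinder of an arbitrary $m$-cylinder and run both inverse orbits down its single inverse branch, so the partition-matching condition in the definition of $Q(x)$ holds automatically --- is exactly the right way to make the claim rigorous, and is consistent with how the paper's Lemma on the diameter of $\mathcal{Q}(x)$ and the proof of Theorem~\ref{the:smoothlivsic} use these objects.

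There is, however, one genuine logical gap: you have misidentified what Theorem~1 of~\cite{NicolScott} is doing in this corollary. Theorem~\ref{the:smoothlivsic} (via Theorem~\ref{the:livsic}) carries the hypothesis $e^{-\chi}\in L_1(m)$, and your third paragraph invokes Theorem~\ref{the:smoothlivsic} without ever checking it. The point of citing Nicol--Scott is precisely to discharge this hypothesis: their result gives a Lipschitz, hence bounded, version of $\chi$ for $\beta$-transformations, so $e^{-\chi}$ is automatically in $L_1(m)$ and the corollary can be stated with no integrability assumption. This is confirmed by the Remark immediately following the corollary, which treats affine $\beta$-transformations (where Nicol--Scott is not available) and is forced to retain the hypothesis ``if $e^{-\chi}$ is in $L_1(m)$.'' So the correct order is: Nicol--Scott first, to license Theorem~\ref{the:smoothlivsic}; only then the $\mathcal{Q}=\{(0,1)\}$ and $T'\equiv\beta$ observations. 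Your use of the Lipschitz version only for ``endpoint behaviour'' leaves the application of Theorem~\ref{the:smoothlivsic} unjustified as written, even though you have all the needed ingredients on the table. A second, smaller point: in your full-subcylinder construction, the ``$T^M|_{C_m}$-preimage of $[0,1/\beta)$'' is not a single $(M+1)$-cylinder, since $T^M$ is not injective on $C_m$ when $M>m$; you need the standard growth argument that the images $T^j(C_m)$ eventually contain a full branch in the image of a single $j$-cylinder inside $C_m$. You flag this yourself as the main obstacle, but as phrased the step does not quite go through.
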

  
\begin{remark} 
  If $T \colon x \mapsto \beta x + \alpha \pmod 1$ is an affine
  $\beta$-transformation, then $\mathcal{Q} = \{ (0,1) \}$, and hence
  if $e^{-\chi}$ is in $L_1 (m)$ then $\chi$ has a $C^k$ version.
\end{remark}

\section{Proof of Theorem~\ref{the:livsic}}

We continue to assume that $(T,I,\mu)$ is  a  piecewise expanding map
satisfying assumptions (i), (ii) and (iii).
For a function $\psi \colon I \to \R$ we define the weighted transfer
operator $\transfer_\psi$ by
\[
  \transfer_\psi f (x) = \sum_{T (y) = x} e^{\psi (y)}
  \frac{1}{|\mathrm{d}_y T|} f(y).
\]

The proof is based on the following two facts, that can be found in 
 Hofbauer and Keller's papers \cite{Hofbauer-Keller1,
  Hofbauer-Keller2}. The first fact is
\begin{equation} \label{eq:L0}
  \begin{minipage}[c]{0.8\textwidth}
    There is a function $h \geq 0$ of bounded variation such that if
    $f \in L^1$ with $f \geq 0$ and $f \neq 0$, then $\transfer_0^n f$
    converges to $h \int f \, \mathrm{d} m$ in $L^1$.
  \end{minipage}
\end{equation}
The second fact is
\begin{equation} \label{eq:Lphi}
  \begin{minipage}[c]{0.8\textwidth}
    Let $f \in L^1$ with $f \geq 0$ and $f \neq 0$ be fixed. There is
    a function $w \geq 0$ with bounded variation, a measure $\nu$, and
    a number $a > 0$, depending on $\phi$, such that
    \[
      a^n \transfer^n_\phi f \to w \int f \, \mathrm{d} \nu,
    \]
    in $L^1$.
  \end{minipage}
\end{equation}
For $f$ of bounded variation, these facts are proved as
follows. Theorem~1 of \cite{Hofbauer-Keller1} gives us the desired
spectral decomposition for the transfer operator acting of functions
of bounded variation. Proposition~3.6 of Baladi's book \cite{Baladi}
gives us that there is a unique maximal eigenvalue. This proves the
two facts for $f$ of bounded variation. The case of a general $f$ in
$L^1$ follows since such an $f$ can be approximated by functions of
bounded variation.

Using that $T$ is weakly covering, we can conclude by Lemma~4.2 in
\cite{Liverani1}, that $h > \gamma > 0$. The proof of this fact in
\cite{Liverani1} goes through also for $w$, and so we may also
conclude that $w > \gamma > 0$.

Let us now see how Theorem~\ref{the:livsic} follows from these facts.
The following argument is analogous to the argument used by Pollicott
and Yuri in \cite{Pollicott-Yuri} for $\beta$-expansions.  We first
observe that $\phi = \chi \circ f - \chi$ implies that
\begin{align*}
  \transfer_\phi^n 1 (x) &= \sum_{T^n (y) = x} e^{S_n \phi (y)}
  \frac{1}{| \mathrm{d}_y T^n |} = \sum_{T^n (y) = x} e^{\chi (T^n y)
    - \chi (y)} \frac{1}{| \mathrm{d}_y T^n |} \\ &= e^{\chi (x)}
  \sum_{T^n (y) = x} e^{- \chi (y)} \frac{1}{| \mathrm{d}_y T^n |} =
  e^{\chi (x)} \transfer_0^n e^{- \chi} (x).
\end{align*}


Since $a^n \transfer_\phi^n 1 \to w$ and $e^{-\chi} \transfer_\phi^n 1
= \transfer_0^n e^{- \chi} \to h \int e^{-\chi} \, \mathrm{d} m$ we
have that $a^n \transfer_\phi^n 1$ converges to $w$ in $L^1$ and
$\transfer_\phi^n 1$ converges to $h e^\chi \int e^{-\chi} \,
\mathrm{d}m$ in $L^1$. By taking a subsequence, we can achieve that
the convergences are a.e.
Therefore, we must have $a = 1$ and
\[
  w (x) = e^{\chi (x)} h (x) \int e^{- \chi} \, \mathrm{d} m,
    \quad \text{a.e.}
\]
It follows that
\[
  \chi (x) =\log w (x) - \log \int e^{- \chi} \, \mathrm{d} m -
  \log h (x),
\]
almost everywhere. Since $h$ and $w$ are bounded away from zero, their
logarithms are of bounded variation. This proves the theorem.

\section{Proof of Theorem~\ref{the:smoothlivsic}}

We first note that it is sufficient to prove that $\chi_0$ is
continuously differentiable on elements of the form $Q (x)$.

Let $x$ and $y$ satisfy $T (y) = x$. Then by $\phi = \chi \circ T -
\chi$ we have $\chi (x) = \phi (y) + \chi (y)$.

Let $x_1$ be a point in an  element of $\mathcal{Q}$, and take
$x_2 \in Q (x_1)$. We choose pre-images $y_{1,j}$ and $y_{2,j}$ of
$x_1$ and $x_2$ such that $T (y_{i,1}) = x_i$ and $T (y_{i,j}) =
y_{i,j-1}$. We then have
\[
  \chi (x_1) - \chi (x_2) = \sum_{j = 1}^n \bigl( \phi (y_{1,j}) -
  \phi (y_{2,j}) \bigr) + \chi (y_{1,n}) - \chi (y_{2,n}).
\]
We would like to let $n \to \infty$ and conclude that $\chi (y_{1,n})
- \chi (y_{2,n}) \to 0$. By Theorem~\ref{the:livsic} we know that
$\chi$ has bounded variation. Assume for contradiction that no matter
how we choose $y_{1,j}$ and $y_{2,j}$ we cannot make $|\chi (y_{1,n})
- \chi (y_{2,n})|$ smaller than some $\varepsilon > 0$. Let $m$ be
large and consider the cylinders of generation $m$. For any such
cylinder $C_m$, we can choose $y_{1,j}$ and $y_{2,j}$ such that
$y_{1,n}$ and $y_{2,n}$ both are in $C_m$. Since $|\chi (y_{1,n}) -
\chi (y_{2,n})| \geq \varepsilon$, the variation of $\chi$ on $C_m$ is
at least $\varepsilon$. Summing over all cylinders of generation $m$,
we conclude that the variation of $\chi$ on $I$ is at least $N (m)
\varepsilon$. Since $m$ is arbitrary and $N (m) \to \infty$ as $m \to
\infty$, we get a contradiction to the fact that $\chi$ is of bounded
variation.

Hence we can make $|\chi (y_{1,n}) - \chi (y_{2,n})|$ smaller that any
$\varepsilon > 0$ by choosing $y_{1,j}$ and $y_{2,j}$ in an
appropriate way. We conclude that
\[
  \chi (x_1) - \chi (x_2) = \sum_{j = 1}^\infty \bigl( \phi (y_{1,j})
  - \phi (y_{2,j}) \bigr).
\]
If $x_1 \neq x_2$ then $y_{1,j} \neq y_{2,j}$ for all $j$, and we
have
\[
  \frac{\chi (x_1) - \chi (x_2)}{x_1 - x_2} = \sum_{j = 1}^\infty
  \frac{\phi (y_{1,j}) - \phi (y_{2,j})}{y_{1,j} - y_{2,j}}
  \frac{y_{1,j} - y_{2,j}}{x_1 - x_2}.
\]
Clearly, the limit of the right hand side exists as $x_2 \to x_1$,
and is 
\[
  \sum_{j = 1}^\infty \phi' (y_{1,j}) \frac{1}{ (T^j)' (y_{1,j})}.
\]
The series converges since $| (T^j)' | > \lambda^j$.  This shows
that $\chi' (x_1)$ exists and satisfies
\begin{equation} \label{eq:chi'formula2}
  \chi' (x_1) = \sum_{j = 1}^\infty \phi' (y_{1,j}) \frac{1}{ (T^j)'
    (y_{1,j})}.
\end{equation}

If $T'$ is constant on the elements of $\mathcal{P}$, then
\eqref{eq:chi'formula2} implies that $\chi$ is in $C^k$ provided that
$\phi$ is in $C^k$.

Let us now assume that $\frac{1}{(T^r)^{'}}$  is in
$C^k$ with derivatives up to order $k$ uniformly bounded in $r$. We proceed
by induction. Let $g_n = \frac{1}{(T^n)'}$. Assume that
\begin{equation} \label{eq:higherder}
  \chi^{(m)} (x) = \sum_{n=1}^\infty \psi_{n,m} (y_n) g_n (y_n),
\end{equation}
where $(\psi_{n,m})_{n=1}^\infty$ is in $C^{n-m}$ with derivatives up
to order $n-m$ uniformly bounded. Then
\[
  \chi^{(m + 1)} (x)
  = \sum_{n=1}^\infty \bigl( \psi_{n,m}' (y_n) g_n (y_n) + \psi_{n,m}
  (y_n) g_n' (y_n) \bigr) g_n (y_n)
  = \sum_{n=1}^\infty \psi_{n,m + 1} g_n (y_n).
\]
This proves that there are uniformly bounded functions $\psi_{n,m}$
such that \eqref{eq:higherder} holds for $1 \leq m \leq k$. The series
in \eqref{eq:higherder} converges uniformly since $g_n$ decays with
exponential speed. This proves that $\chi$ is in $C^k$.
\qed

\newpage

\end{document}